\font\smallit=cmti10
\renewcommand\section{\@startsection {section}{1}{\z@}
	{-30pt \@plus -1ex \@minus -.2ex}
	{2.3ex \@plus.2ex}
	{\normalfont\normalsize\bfseries}}
\renewcommand\subsection{\@startsection{subsection}{2}{\z@}
	{-3.25ex\@plus -1ex \@minus -.2ex}
	{1.5ex \@plus .2ex}
	{\normalfont\normalsize\bfseries}}
\renewcommand{\@seccntformat}[1]{\csname the#1\endcsname. }
\newtheorem{thm}{Theorem}[section]
\newenvironment{proof}{\par\noindent\textsc{Proof}}{\medskip}
\def\qed{\relax\ifmmode\hskip2em \Box\vspace{-7pt}
	\else\unskip\nobreak\hskip1em $\Box$\fi}
\renewcommand{\arraycolsep}{1pt}
\begin{document}
	\vskip 40pt
	
	\begin{center}
		\uppercase{\bf Steinhaus Triangles Generated by \\
			Vectors of the Canonical Bases}\\
		\vskip 20pt
		{\bf Josep M. Brunat and
			Montserrat Maureso} \\
		{\smallit Departament de Matem\`atica Aplicada, Universitat
			Polit\`ecnica  de Catalunya, Barcelona, Catalunya}\\
		{\tt josep.m.brunat@upc.edu,
			montserrat.maureso@upc.edu}\\ 
		\vskip 10pt
	\end{center}
	\vskip 30pt
	
	\centerline{\bf Abstract}
	We give a method to calculate the weight of binary Steinhaus triangles
	generated by the vectors of the canonical basis of the vector space $\mathbb{F}_2^n$ over the field $\mathbb{F}_2$ of order $2$.
	\noindent

\section{Introduction}
In 1958 H.~Steinhaus~\cite{Steinhaus}(Chapter VII) posed the following problem:

\begin{quote}
 The figure given below consists of 14 plus signs and 14 minus
signs. They are arranged in such a way that under each pair of
equal signs there appears a positive sign and under opposite signs
a minus sign.
$$
\begin{array}{cccccccccccccc}
+ &   & + &   & - &   & + &   & - &   & + &   & + \\
  & + &   & - &   & - &   & - &   & - &   & + \\
  &   & - &   & + &   & + &   & + &   & - \\
  &   &   & - &   & + &   & + &   & - \\
  &   &   &   & - &   & + &   & - \\
  &   &   &   &   & - &   & - &  \\
  &   &   &   &   &   & +  \\
\end{array}
$$
If the first row had $n$ signs, then in an analogous figure there
would be $n(n+1)/2$ signs; our example corresponds to the case
$n = 7$. As $n(n+1)/2$ is an even number for $n=3, 4, 7, 8, 11,
12$ etc., we can ask whether it is possible to construct a figure
analogous to the above one and beginning with $n$ signs in the
highest row.
\end{quote}

The figure above  is called a \emph{Steinhaus triangle}. We can change
signs $+$ and $-$ by $1$ and $0$, respectively, and
reformulate the rule of signs by the sum in the field $\mathbb{F}_2$ of order $2$.  The number
of ones in a sequence is called its \emph{weight}, and the number of ones in
the whole triangle is called the \emph{weight} of the triangle. More formally, given
a sequence $\mathbf{x}=(x_0,\ldots,x_{n-1})\in \mathbb{F}_2^n$, its
\emph{derivative} is the sequence
$\partial\mathbf{x}=(x_0+x_1,x_1+x_2,\ldots,x_{n-2}+x_{n-1})\in\mathbb{F}_2^{n-1}$.
Recursively, for $r\ge 2$, we define
$\partial^r\mathbf{x}=\partial\partial\mathbf{x}^{r-1}$; we also define $\partial^0\mathbf{x}=\mathbf{x}$.
The \emph{Steinhaus triangle}
generated by $\mathbf{x}$ is the sequence
$T(\mathbf{x})=(\mathbf{x},\partial\mathbf{x},\partial^2\mathbf{x},\ldots,\partial^{n-1}\mathbf{x})$. The length $n$ of the initial sequence is the \emph{size} of the triangle. The
$r$-th derivative $\partial^r\mathbf{x}$ of $\mathbf{x}$ is the $r$-th row the
triangle (thus, we number rows from $0$ to $n-1$). We number the
coordinates of $\partial^r\mathbf{x}$ from $0$ to $n-1-r$.  The \emph{weight}
of the sequence $\mathbf{x}=(x_0,\ldots,x_{n-1})\in\mathbb{F}_2^n$
is $|\mathbf{x}|=\#\{i: x_i=1\}$, and the \emph{weight} of $T(\mathbf{x})$ is
$$
|T(\mathbf{x})|=\sum_{i=0}^{n-1}|\partial^i\mathbf{x}|.
$$
The problem posed by Steinhaus is the following one: given $n\equiv 0,3\pmod{4}$, determine if there exist sequences
$\mathbf{x}\in \mathbb{F}_2^n$ such that $|T(\mathbf{x})|=n(n+1)/4$.
H.~Harborth~\cite{Harborth} solved the problem
by constructing examples of such
sequences. The problem has been also solved for sequences $\mathbf{x}$ with
some additional conditions (S.~Elihaou and D.~Hachez~\cite{ElHa, ElHa2},
S.~Elihaou, J.~M.~Mart\'{\i}n and M.~P.~Revuelta~\cite{ElMaRe}) and also generalized
for sequences in the cyclic group $\mathbb{Z}_m$
(J.~C. Molluzzo~\cite{Molluzo}, J.~Chappelon~\cite{Chappelon,Chappelon4,Chappelon5},
J.~Chappelon and S.~Elihaou~\cite{ChEl}). Steinhaus triangles appear in the
context of cellular automata, see A.~Barb\'e~\cite{Barbe2,Barbe3,Barbe} and J. Chappelon~\cite{Chappelon4,Chappelon5}.
In this context, A. Barb\'e~\cite{Barbe} has
studied some properties related to symmetries.
Steinhaus triangles with rotational and dihedral symmetry are characterized by J.~M. Brunat and M. Maureso in~\cite{BrMa} and, in~\cite{BrMa2}, they give some results about the sequences that generate triangles with extreme weights. Also, there is an asymptotic result by
F.~M.~Malysev and E.~V.~Kutyreva~\cite{MaKu}, who
estimated the number of Steinhaus triangles (which they call Boolean
Pascal triangles, and take the initial sequence in the bottom of the triangle)
of sufficiently large size $n$ containing a given
number $\omega\le kn$ ($k>0$) of ones.  Not much else is known about the weight distribution of Steinhaus triangles.

An easy induction that involves only the binomial number recurrence show that the entry of a Steinhaus triangle $T(\mathbf{x})$ on row $r$ and column $c$ is
\begin{equation}
\label{general entry}
T(\mathbf{x})(r,c)=\sum_{i=0}^r {r\choose i}x_{c+i},
\end{equation}
so, it depends linearly on the entries of $\mathbf{x}$.
Thus, the set $S(n)$ of Steinhaus triangles of size $n$ is an
$\mathbb{F}_2$-vector space of dimension $n$ that can be identified with a vector subspace of $\mathbb{F}_2^{n(n+1)/2}$,
that is, with a (binary) linear code of length $n(n+1)/2$ and dimension $n$. Then, the weight distribution of Steinhaus triangles, or equivalently, how many triangles
exist of every possible weight, is a particular case of the general problem of weight distribution in linear codes.
This is, in general, a difficult problem, and it seems that it is also difficult for the particular case of
Steinhaus triangles.

Obviously, in $S(n)$ there exists only one Steinhaus triangle of weight
$0$, which is the one generated by the sequence
$\mathbf{0}=(0,\ldots,0)$. It is easy to see that the next possible
weight is $n$ and that it is generated by the sequences
$\mathbf{e}_0=(1,0,\ldots,0)$ and
$\mathbf{e}_{n-1}=(0,\ldots,0,1)$. H.~Harborth, in his
paper~\cite{Harborth} solving the original problem, observed also the
maximum weight. Let $\overline{110}[n]$ the sequence formed by the
first $n$ terms of the succession $(1,1,0,1,1,0,1,1,0,\ldots)$ (for
instance $\overline{110}[6]=(1,1,0,1,1,0)$,
$\overline{110}[7]=(1,1,0,1,1,0,1)$,
$\overline{110}[8]=(1,1,0,1,1,0,1,1)$). Define
$\mathbf{z}_1=\overline{110}[n]$, and, analogously,
$\mathbf{z}_2=\overline{101}[n]$,
$\mathbf{z}_3=\overline{011}[n]$. Then, the maximum weight among
Steinhaus triangles of size $n$ is
\begin{itemize}
\item $n(n+1)/3$ if $n\equiv 0,2\pmod{3}$, and the sequences that generate the triangles of this weight are
$\mathbf{z}_1$, $\mathbf{z}_2$ and $\mathbf{z}_3$.
\item $(n^2+n+1)/3$ if $n\equiv 1\pmod{3}$, and the sequences that generate the triangles of this weight are
$\mathbf{z}_1$ and  $\mathbf{z}_3$.
\end{itemize}

Here, we study the weight of Steinhaus triangles generated by the vectors of the canonical basis. The main tool is Lucas's Theorem, so let us recall it.
Given a prime number $p$, Lucas's Theorem gives a
way to calculate binomial numbers in $\mathbb{F}_p$, the field of order $p$.

 \begin{thm}[Lucas's Theorem] Let $p$ be a prime number,
 \begin{align*}
 &r=\alpha_tp^t+\alpha_{t-1}p^{t-1}+\cdots+\alpha_1 p+\alpha_0,\mbox{ and } \\
 &s=\beta_tp^t+\beta_{t-1}p^{t-1}+\cdots+\beta_1p+\beta_0,
\end{align*}
with $\alpha_i,\beta_i\in\{0,1,\ldots,p-1\}$ for $i\in\{0,\ldots,t\}$.
Then,
$$
{r\choose s}={\alpha_t\choose\beta_t}{\alpha_{t-1}\choose\beta_{t-1}}\cdots {\alpha_0\choose\beta_0}
\quad \mbox{in }\mathbb{F}_p.
$$
\end{thm}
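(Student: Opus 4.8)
The plan is to prove Lucas's Theorem by manipulating generating polynomials over $\mathbb{F}_p$. The starting point is that for $0<k<p$ the integer $\binom{p}{k}=\frac{p!}{k!\,(p-k)!}$ is divisible by $p$, since $p$ divides the numerator but no factor of the denominator. Hence in the polynomial ring $\mathbb{F}_p[X]$ we have $(1+X)^p=1+X^p$. Raising both sides to the $p$-th power and iterating (using $(1+X^{p^i})^p=1+X^{p^{i+1}}$ for the same reason), we obtain $(1+X)^{p^i}=1+X^{p^i}$ in $\mathbb{F}_p[X]$ for every $i\ge 0$.

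Next I would split $(1+X)^r$ according to the base-$p$ expansion $r=\sum_{i=0}^t\alpha_i p^i$, which gives
$$(1+X)^r=\prod_{i=0}^t\bigl((1+X)^{p^i}\bigr)^{\alpha_i}=\prod_{i=0}^t\bigl(1+X^{p^i}\bigr)^{\alpha_i}$$
in $\mathbb{F}_p[X]$. Expanding each factor on the right by the ordinary binomial theorem, $\bigl(1+X^{p^i}\bigr)^{\alpha_i}=\sum_{j_i=0}^{\alpha_i}\binom{\alpha_i}{j_i}X^{j_i p^i}$, and multiplying out, the coefficient of $X^s$ in the product equals $\sum\prod_{i=0}^t\binom{\alpha_i}{j_i}$, where the sum ranges over all tuples $(j_0,\ldots,j_t)$ with $0\le j_i\le\alpha_i\le p-1$ and $\sum_{i=0}^t j_i p^i=s$.

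The heart of the matter is the uniqueness of base-$p$ representations: since $0\le j_i\le p-1$, the value $\sum_i j_i p^i$ determines the digits $j_i$, so at most one tuple contributes to the coefficient of $X^s$, namely $j_i=\beta_i$ for all $i$, and this tuple is admissible precisely when $\beta_i\le\alpha_i$ for every $i$. If some $\beta_i>\alpha_i$ there is no admissible tuple and the coefficient is $0$, which matches the right-hand side of Lucas's formula under the convention $\binom{\alpha_i}{\beta_i}=0$; otherwise the coefficient is $\prod_{i=0}^t\binom{\alpha_i}{\beta_i}$. On the other hand the ordinary binomial theorem identifies the coefficient of $X^s$ in $(1+X)^r$ with $\binom{r}{s}$, and reducing modulo $p$ yields the claimed identity in $\mathbb{F}_p$. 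The one place I would take extra care is the bookkeeping around this coefficient comparison — in particular the uniqueness of the contributing digit tuple and the degenerate case $\beta_i>\alpha_i$; the polynomial algebra itself is entirely routine.
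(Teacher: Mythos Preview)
Your proof is correct and is in fact the classical generating-function argument. The paper itself does not give a proof of Lucas's Theorem at all: it merely states the result and refers the reader to Fine~\cite{Fine}. Your argument is essentially the one in Fine's note, so in spirit your approach coincides with what the paper points to.
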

See~\cite{Fine} for a  nice proof from N.~J.~Fine.

\section{Canonical Basis}

For $n\ge 1$ and $0\le k\le n-1$, the $k$-th vector of the canonical basis of $\mathbb{F}_2^n$ is
$$
\mathbf{e}_k^{(n)}=(0,\stackrel{k)}{\ldots},0,1,0,\stackrel{n-1-k)}{\ldots},0).
$$
In order to  simplify notation, we define $T(k,n)=T(\mathbf{e}_k^{(n)})$ and $w(k,n)=|T(k,n)|$.
We give a way to calculate $w(k,n)$ that depends on $t=\lfloor \log_2k\rfloor +1$ and on the quotient and the remainder of dividing $n$ by $2^t$.

We apply Proposition~\ref{general entry} to the vectors of the canonical basis of $\mathbb{F}_2^n$.
The coordinates of $\mathbf{e}_k^{(n)}$
are $x_k=1$ and  $x_{c+i}=0$ for $i\ne k-c$. Therefore,
$$
T(k,n)(r,c)={r\choose k-c}={r\choose r-k+c}.
$$
In particular, $T(k,n)(r,c)=0$ for $c>k$, that is, in any row of
$T(k,n)$, all entries in positions $c$ with $c>k$ are $0$.

As $T(0,n)(r,0)=1$ and $T(0,n)(r,c)=0$ for $c>0$, we have $w(0,n)=n$
for all $n\ge 1$. Thus, in the following, we assume $k\ge 1$.

Note that the triangles $T(k,n)$ and $T(n-1-k,n)$ are
symmetric with respect to the vertical line passing through the
bottom vertex of the triangle:
\begin{align*}
T(n-1-k,n)(r,n-1-r-c)=&{r \choose r-(n-1-k)+(n-1-r-c)}\\[3pt]
=&{r\choose k-c}=T(k,n)(r,c).
\end{align*}
As we are interested in the values of
$w(k,n)=|T(k,n)|$, and the triangles
$T(k,n)$ and $T(n-1-k,n)$ have the same
weight because of symmetry, we can assume $n\ge 2k+1\ge 3$.

For $m\in\{1,\ldots, n\}$, denote by $s(k,n,m)$ the sum of the weights of the rows $0,1,\ldots,m-1$ of $T(n,k)$.
Our main result is the following.

\begin{thm}
\label{main}
Let $k\ge 1$ and $n\ge 2k+1$ be integers. Let
$t=\lfloor \log_2 k\rfloor+1$, \
$q=\lfloor n/2^t \rfloor$,\
$r=n-2^tq$,\
$\lambda=s(k,k+1+2^t,2^t)$, and
$\mu=w(k,r+2^t)$.
Then,
$$
w(k,n)=(q-1)\lambda+\mu.
$$
\end{thm}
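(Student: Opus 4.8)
The plan is to prove the identity by induction on $q=\lfloor n/2^t\rfloor$, with the inductive engine being a \emph{shift relation}
$$
w(k,n)=w(k,n-2^t)+\lambda ,
$$
valid whenever $q\ge 2$. (Note $q\ge1$ always, since $n\ge 2k+1\ge 2^t+1>2^t$ because $2^{t-1}\le k$.)

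First I would record the combinatorial description of $T(k,n)$. By the identity $T(k,n)(r,c)={r\choose k-c}$ already derived, together with Lucas's Theorem, the entry in row $r$ and column $c$ equals $1$ if and only if every binary digit of $k-c$ is at most the corresponding binary digit of $r$; write $d\preceq r$ for this relation. Since $2^{t-1}\le k<2^t$, every index $d\in\{0,\dots,k\}$ satisfies $d<2^t$, so whether $d\preceq r$ holds depends only on $r\bmod 2^t$. I then split the rows of $T(k,n)$ into the \emph{full} rows $r$ with $0\le r\le n-1-k$, which contain all $k+1$ potentially nonzero entries, and the $k$ remaining \emph{partial} rows $r=n-k,\dots,n-1$. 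The weight of a full row $r$ is $W(r):=\#\{d:0\le d\le k,\ d\preceq r\}$, a function of $r$ periodic with period $2^t$; writing the partial rows with index $j=n-1-r\in\{0,\dots,k-1\}$, the weight of such a row is $\#\{d:k-j\le d\le k,\ d\preceq n-1-j\}$. Summing,
$$
w(k,n)=\sum_{r=0}^{n-1-k}W(r)\ +\ \sum_{j=0}^{k-1}\#\{d:k-j\le d\le k,\ d\preceq n-1-j\}.
$$

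Next I would identify $\lambda$ and establish the shift relation. By definition $\lambda$ is the total weight of rows $0,\dots,2^t-1$ of $T(k,k+1+2^t)$, and in that triangle the first $2^t+1$ rows are full, so $\lambda=\sum_{\rho=0}^{2^t-1}W(\rho)$ is exactly one full period of $W$. Now assume $q\ge2$, so that $n\ge 2^{t+1}$; this guarantees $n-2^t\ge 2^t\ge k+1$ and keeps all index ranges below nonnegative (in particular $n-2^t-k\ge1$). Applying the displayed decomposition to both $n$ and $n-2^t$ and subtracting: in the partial‑row sums the arguments $n-1-j$ and $n-2^t-1-j$ are congruent modulo $2^t$, and since all $d\le k<2^t$ the predicates $d\preceq n-1-j$ and $d\preceq n-2^t-1-j$ agree, so those two sums cancel; in the full‑row sums the difference is $\sum_{r=n-2^t-k}^{n-1-k}W(r)$, a run of $2^t$ consecutive values of the $2^t$‑periodic function $W$, hence equal to $\sum_{\rho=0}^{2^t-1}W(\rho)=\lambda$. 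This is the shift relation.

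The induction is then immediate. For $q=1$ we have $n=r+2^t$, so $w(k,n)=w(k,r+2^t)=\mu=(q-1)\lambda+\mu$. For $q\ge2$, the size $n-2^t$ has the same $t$, the same remainder $r$ (hence the same $\lambda$ and $\mu$), and quotient $q-1$; by the induction hypothesis $w(k,n-2^t)=(q-2)\lambda+\mu$, and the shift relation gives $w(k,n)=(q-1)\lambda+\mu$. I expect the only real difficulty to be bookkeeping: verifying that in the shift step all the bounds $n-1-k$, $n-2^t-1-k$, $n-2^t-k$, $n-1-j$, $n-2^t-1-j$ lie in their intended ranges, so that both the "full rows plus partial rows" decomposition and the congruence‑mod‑$2^t$ cancellation are legitimate, and that the base case $q=1$ matches the definition of $\mu$ exactly. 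All the substance lies in the $2^t$‑periodicity of the row weights, which is a direct consequence of Lucas's Theorem.
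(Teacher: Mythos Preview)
Your proof is correct. The overall architecture---establish the shift relation $w(k,n)=w(k,n-2^t)+\lambda$ via Lucas's Theorem, then iterate---coincides with the paper's, but the way you prove the shift relation is organized differently. The paper observes in one stroke that $\partial^{2^t}\mathbf{e}_k^{(n)}=\mathbf{e}_k^{(n-2^t)}$ (since $\binom{2^t}{k-c}=0$ for $0\le c<k$ by Lucas), so deleting the first $2^t$ rows of $T(k,n)$ leaves exactly $T(k,n-2^t)$; this immediately gives $w(k,n)-w(k,n-2^t)=s(k,n,2^t)$, and the columns beyond $k$ being zero identifies that with $\lambda$. You instead decompose $w(k,n)$ into full-row and partial-row contributions and exploit the $2^t$-periodicity of each piece separately. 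Your route is a bit heavier on bookkeeping (the index checks you flag are all fine), while the paper's self-similarity observation avoids the full/partial split entirely; on the other hand, your formulation makes the periodicity of the individual row weights explicit, which is a pleasant byproduct.
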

\begin{proof}
We have $2^{t-1}\le k<2^t$, hence $n\ge 2k+1\ge 2\cdot 2^t+1$. We claim that row $2^t$  of
$T(k,n)$ is just $\mathbf{e}_k^{(n-2^t)}$. Indeed, we have
$$
T(k,n)(2^t,c)={2^t\choose k-c}.
$$
If $c<k$, and $k-c=\alpha_a2^a+\cdots +\alpha_12+\alpha_0$, we have $a<2^t$ and some $\alpha_i=1$ because $k-c>0$.
By Lucas's Theorem,
$$
{2^t\choose k-c}={1\choose 0}{0\choose 0}\cdots {0\choose 0} {0\choose\alpha_a}\cdots{0\choose\alpha_i}
               \cdots {0\choose\alpha_0}=0
$$
because ${0\choose \alpha_i}={0\choose 1}=0$.
Also, we have $T(k,n)(2^t,k)={2^t\choose 0}=1$, and, for $c>k$, $T(k,n)(2^t,c)={2^t\choose k-c}=0$,
so $\partial^{2^t}\mathbf{e}_k^{(n)}=\mathbf{e}_k^{(n-2^t)}$.

Therefore, if we delete the first $2^t$ rows of the triangle
$T(k,n)$, we obtain the triangle $T(k, n-2^t)$. Then, $w(k,n)-w(k,n-2^t)=s(k,n,2^t)$.
In any row, all entries in positions $c>k$  are $0$. Then,
$s(k,n,2^t)=s(k,k+1+2^t,2^t)$. Let $q$ and $r$ be the quotient and the remainder of dividing $n$ by $2^t$. Then,
for a fixed $k$, the sequence
$$
w(k,r+2^t), \ w(k,r+2\cdot 2^t), \ w(k,r+3\cdot 2^t),\ \ldots,\  w(k,r+q\cdot 2^t)=w(k,n)
$$
is an arithmetic progression with difference $\lambda=s(k,k+1+2^t,2^t)$. If $\mu=w(k,r+2^t)$, we have
$w(k,n)=\lambda(q-1)+\mu$. \qed
\end{proof}

For instance, let us compute $w(6,203)$. We have $k=6$; $t=3$; $2^t=8$, $n=203=8\cdot 25+3$, that is $q=25$ and $r=3$.
Let us calculate $\lambda=s(k,k+1+2^t,2^t)=s(6,15,8)$ and  $\mu=w(k,r+2^t)=w(6,11)$:
$$
\begin{array}{cc}
\begin{array}{cccccccccccccccccccccccccccccc}
0 && 0 && 0 && 0 && 0 && 0 && 1 && 0 && 0 && 0 && 0 && 0 && 0 && 0 && 0 \\
& 0 && 0 && 0 && 0 && 0 && 1 && 1 && 0 && 0 && 0 && 0 && 0 && 0 && 0 \\
&&  0 && 0 && 0 && 0 && 1 && 0 && 1 && 0 && 0 && 0 && 0 && 0 && 0  \\
&&&   0 && 0 && 0 && 1 && 1 && 1 && 1 && 0 && 0 && 0 && 0 && 0 \\
&&&&    0 && 0 && 1 && 0 && 0 && 0 && 1 && 0 && 0 && 0 && 0 \\
&&&&&     0 && 1 && 1 && 0 && 0 && 1 && 1 && 0 && 0 && 0 \\
&&&&&&      1 && 0 && 1 && 0 && 1 && 0 && 1 && 0 && 0 \\
&&&&&&&       1 && 1 && 1 && 1 && 1 && 1 && 1 && 0
\end{array}
\\
\\
\begin{array}{ccccccccccccccccccccccccc}
0 && 0 && 0 && 0 && 0 && 0 && 1 && 0 && 0 && 0 && 0 \\
& 0 && 0 && 0 && 0 && 0 && 1 && 1 && 0 && 0 && 0  \\
&&  0 && 0 && 0 && 0 && 1 && 0 && 1 && 0 && 0  \\
&&&   0 && 0 && 0 && 1 && 1 && 1 && 1 && 0 \\
&&&&    0 && 0 && 1 && 0 && 0 && 0 && 1 \\
&&&&&     0 && 1 && 1 && 0 && 0 && 1 \\
&&&&&&      1 && 0 && 1 && 0 && 1 \\
&&&&&&&       1 && 1 && 1 && 1 \\
&&&&&&&&        0  && 0 && 0 \\
&&&&&&&&&          0  && 0 \\
&&&&&&&&&&            0
\end{array}
\end{array}
$$
We see that  $\lambda=s(6,15,8)=26$ and  $\mu=w(6,11)=21$. Then
$w(6,203)=\lambda(q-1)+\mu=26\cdot 24+21=645$.

Note that $\lambda=s(k,k+1+2^t,2^t)$ depends only on $k$, and not on
$n$. Moreover, $\mu=w(k,2^t+r)$ depends on $k$ and $r$, but $r$, as
the remainder of a division by $2^t$, is bounded by $2^t\le 2k$.

The tables bellow give the values $\lambda$ and $\mu$ for $k\in\{1,2,3,4,5,6,7\}$. As in Theorem~\ref{main},
$t=\lfloor \log_2 k\rfloor +1$ and $r$ is the remainder of the division of $n$ by $2^t$.


$$
\renewcommand{\arraycolsep}{5pt}
\begin{array}{|l|ll|llll|llll|}
\hline
k       & \mathbf{1}    &    & \mathbf{2}    &      &    &      & \mathbf{3}    &      &      &  \\
\hline
t       & 1    &    & 2    &      &    &      & 2    &      &      &  \\
\hline
\lambda & 3    &    & 8    &      &    &      & 9    &      &      &  \\
\hline
r       & 0    & 1  & 0    & 1    & 2  & 3    & 0    & 1    & 2    & 3 \\
\hline
\mu     & 2    & 3  & 5    & 7    & 8  & 11   & 4    & 6    & 8    & 9 \\
\hline
\end{array}
$$
$$
\renewcommand{\arraycolsep}{5pt}
\begin{array}{|l|llllllll|llllllll|}
\hline
k       & \mathbf{4}     &       &       &       &     &       &      &
        & \mathbf{5}     &       &       &       &     &       &      &     \\
\hline
t       & 3     &       &       &       &     &       &      &
        & 3     &       &       &       &     &       &      &    \\
\hline
\lambda & 22    &       &       &       &     &       &      &
        & 24    &       &       &       &     &       &      &  \\
\hline
r       & 0     & 1     & 2     & 3     & 4   & 5     & 6    & 7
        & 0     & 1     & 2     & 3     & 4   & 5     & 6    & 7     \\
\hline
\mu     & 13    & 17    & 19    & 21    & 22  & 27    & 30    & 33
        & 13    & 15    & 19    & 21    & 23  & 24    & 30    & 33  \\
\hline
\end{array}
$$
$$
\renewcommand{\arraycolsep}{5pt}
\begin{array}{|l|llllllll|llllllll|}
\hline
k       & \mathbf{6}     &       &       &       &     &       &      &
        & \mathbf{7}     &       &       &       &     &       &      &     \\
\hline
t       & 3     &       &       &       &     &       &      &
        & 3     &       &       &       &     &       &      &    \\
\hline
\lambda & 26    &       &       &       &     &       &      &
        & 27    &       &       &       &     &       &      &  \\
\hline
r       & 0     & 1     & 2     & 3     & 4   & 5     & 6    & 7
        & 0     & 1     & 2     & 3     & 4   & 5     & 6    & 7     \\
\hline
\mu     & 11    & 15    & 17    & 21    & 23  & 25    & 26    & 33
        &  8    & 12    & 16    & 18    & 22  & 24    & 26    & 27  \\
\hline
\end{array}
$$

We can see the relation
\begin{equation} \label{rec}
w(k,n)-w(k,n-2^t)=s(k,k+1+2^t,2^t)\end{equation}
 in another way. In fact, for a fixed $k$,
the weights $w(k,n)$ satisfy the  linear recurrence (\ref{rec}) of order $2^t$ with constant coefficients. The characteristic
polynomial is $x^{2^t}-1$, and its roots are the $2^t$-th roots of unity,
$$
\alpha^j=\cos(2\pi j/2^t)+i\sin(2\pi j/2^t),\quad j\in\{0,\ldots,2^t-1\}.
$$
As the right hand side of the recurrence is a constant, the solution of the recurrence is of the form
\begin{equation}
\label{solrec}
f(n)=A_0+A_1n+B_1\alpha^n+B_2\alpha^{2n}+\cdots+B_{2^t-1}\alpha^{(2^t-1)n},
\end{equation}
for certain complex constants $A_0,A_1,B_1,\ldots,B_{2^t-1}$. These constants can be determined
by finding the initial conditions $w(k, 2k+1)$, $w(k,2k+2)$, $\ldots$, $w(k,2k+2^t+1)$ and solving the system of the
$2^t+1$ equations $f(2k+j)=w(k, 2k+j)$, $j\in\{1,\ldots,2^t+1\}$ on the unknowns $A_0,A_1,B_1,\ldots,B_{2^t-1}$.
We skip the (long) calculations, but the results of this process for $1\le k\le 7$ are the following:
\begin{align*}
w(1,n) = & \frac{1}{4}\left(-5+6n+(-1)^n\right)\\[3mm]
w(2,n) = & \frac{1}{4}\left(-13+8n-(-1)^n+2\cos\frac{n\pi}{2} \right)\\[3mm]
w(3,n) = & \frac{1}{8}\left(-45+18n+3(-1)^n+2\cos\frac{n\pi}{2}+6\sin\frac{n\pi}{2}\right)
\\[3mm]
w(4,n) = & \frac{1}{8}\left(-71+22n-3(-1)^n+2(2+\sqrt{2})\cos\frac{n\pi}{4}\right.\\
         &\phantom{\frac{1}{8}\left(\right.} \left.+2\sin\frac{n\pi}{2}
           -6\cos\frac{n\pi}{2}+ 2(2-\sqrt{2})\cos\frac{3n\pi}{4} \right)
\end{align*}
 \begin{align*}
 w(5,n)=&\frac{1}{16}\left(-196+48n+(4+3\sqrt{2})\cos\frac{n\pi}{4}+(2+3\sqrt{2})\sin\frac{n\pi}{4}\right.\\
        &\qquad -2\cos\frac{n\pi}{2}-6\sin\frac{n\pi}{2}\\
        &\qquad +(4-3\sqrt{2})\cos\frac{3n\pi}{4}+(-2+3\sqrt{2})\sin\frac{3n\pi}{4}\\
        &\qquad +8\cos n\pi\\
        &\qquad +(4-3\sqrt{2})\cos\frac{5n\pi}{4}+(2-3\sqrt{2})\sin\frac{5n\pi}{4}\\
        &\qquad -2\cos\frac{3n\pi}{2}+6\sin\frac{3n\pi}{2}\\
        &\qquad \left. +(4+3\sqrt{2})\cos\frac{7n\pi}{4}-(2+3\sqrt{2})\sin\frac{7n\pi}{4}\right)\\[3mm]
w(6,n)=&\frac{1}{8}\left(-128+26n+(1+\sqrt{2})\cos\frac{n\pi}{4}+(4+2\sqrt{2})\sin\frac{n\pi}{4}\right.\\
       &\qquad +4\cos\frac{n\pi}{2}-\sin\frac{n\pi}{2} \\
       &\qquad +(1-\sqrt{2})\cos\frac{3n\pi}{4}+(-4+2\sqrt{2})\sin\frac{3n\pi}{4}\\
       &\qquad -4\cos(n\pi)\\
       &\qquad +(1-\sqrt{2})\cos\frac{5n\pi}{4}+(4-2\sqrt{2})\sin\frac{5n\pi}{4}\\
       &\qquad +4\cos\frac{3n\pi}{2}+\sin\frac{3n\pi}{2}\\[3mm]
       &\qquad \left.+(1+\sqrt{2})\cos\frac{7n\pi}{4}+(-4-2\sqrt{2})\sin\frac{7n\pi}{4}\right)\\
w(7,n) =&\frac{1}{16}\left(-315+54n+(-1-3\sqrt{2})\cos\frac{n\pi}{4}+(7+6\sqrt{2})\sin\frac{n\pi}{4}\right.\\
        &\qquad +3\cos\frac{n\pi}{2}+9\sin\frac{n\pi}{2}\\
        &\qquad +(-1+3\sqrt{2})\cos\frac{3n\pi}{4}+(-7+6\sqrt{2})\sin\frac{3n\pi}{4}\\
        &\qquad +9\cos(n\pi)\\
        &\qquad +(-1+3\sqrt{2})\cos\frac{5n\pi}{4}+(7-6\sqrt{2})\sin\frac{5n\pi}{4}\\
        &\qquad +3\cos\frac{3n\pi}{2}-9\sin\frac{3n\pi}{2}\\
       &\qquad \left.+(-1-3\sqrt{2})\cos\frac{7n\pi}{4}+(-7-6\sqrt{2})\sin\frac{7n\pi}{4}\right).
\end{align*}

\end{document}